\newcommand\thistitle{Modular Metric Spaces}%full title.
\newcommand\thistitleshort{Modular Metric Spaces}% short version for running head.
\title[\thistitleshort]{\thistitle}%do not remove this
\newcommand\thisauthor{Hana Abobaker and Raymond A. Ryan}%full name(s)
\newcommand\thisauthorshort{Abobaker and  Ryan}%surname(s) for running head
\author[\thisauthorshort]{\thisauthor}
\address[Hana Abobaker]{Omar Al-Mukhtar University, Al Bayda, Libya}  %No \\ in address.
\address[Raymond A. Ryan]{School of Mathematics, Statistics \&
Applied Mathematics, NUI Galway, Ireland.}
\email[Hana Abobaker]{libya.first1990@gmail.com}
\email[Raymond A. Ryan]{ray.ryan@nuigalway.ie}
\thanks{The work of the first author was supported by a grant from the Ministry of Education of Libya, to whom she expresses her gratitude.}
\keywords{modular metric, fixed point}% helpful keywords
\subjclass[2010]{Primary 54H25; Secondary 55M20, 58C30}% AMS 2010 subject classifications.
\newtheorem*{Theorem*}{Theorem}
\newtheorem{example}{Example}[section]
\newtheorem{theorem}{Theorem}[section]
\theoremstyle{definition}
\newtheorem{defi}{Definition}
\theoremstyle{definition}
\newtheorem{proposition}[theorem]{Proposition}
\newcommand\authorbio{
{\bf Hana Abobaker} was a student in the MSc in
Mathematics at National University of Ireland Galway
in 2015--16, supported by a grant from the Mininstry
of Education of Libya. She is currently working in
Omar Al-Mukhtar University, Al Bayda, Libya.
\\
{\bf Raymond A. Ryan} is a long standing member of
the School of Mathematics, Statistics \& Applied Mathematics at NUI Galway. He is the author of the
book \textit{Introduction to Tensor Products of Banach Spaces}, published by Springer.
}
\begin{document}
\maketitle

%%%%%%%%%% AUTHOR: ADD YOUR ABSTRACT:
\begin{abstract}
We give a short introduction to the  theory of modular metric spaces. 
This is a corrected version of the paper \cite{AR},
which had some errors.  We are grateful to V. V. Chistyakov
for bringing these to our attention.
\end{abstract}

%%%%%%%%%%%%%%%%%%%%%%%  AUTHOR's MAIN TEXT BEGINS HERE:
%%%%%%%%%%%%%%%%%%%%%%%  Use amsart conventions. 
%%%%%%%%%%%%%%%%%%%%%%

\section{Introduction}
This is a corrected version of the paper \cite{AR},
which had some errors.  We are grateful to V. V. Chistyakov
for bringing these to our attention.

We begin with a simple motivating example.  
Let $X$ be the set of all points above water on the earth's surface.  
For two points $x$, $y$, let us denote by $v_t(x,y)$ the average speed required to travel over land 
from $x$ to $y$ in a time $t$.  What are the properties of the
function $v_t(x,y)$?

Clearly, if we fix $x$ and $y$, then $v_t(x,y)$ takes nonnegative values and is a non-increasing function of $t$. And 
of course, this function is symmetric in $x$ and $y$.  But there is an issue we have glossed over --- what if $x$ and $y$ lie in different landmasses?  Since we are required to 
travel by land, it is impossible to get from $x$ to $y$ in 
a time $t$, no matter how fast we travel.  As we would like our speed function to be defined in all circumstances, it is reasonable to allow extended real values and to assign the value $v_t(x,y)=\infty$ in this case.  

To summarize, we now have a function taking $t>0$ and $x$, $y\in X$ to $v_t(x,y)\in [0,\infty]$ 
that is symmetric in $x$, $y$ and non-increasing in $t$.  
There is one further property worthy of note.  A simple calculation with average
speeds, namely $v_t(x,y)= d(x,y)/t$ where $d(x,y)$ is the distance between $x$ and $y$, shows that 
$$
v_{s+t}(x,y)\le v_s(x,z) + v_t(z,y)
$$
for all $s,t>0$ and all $x,y,z\in X$.

In 2006, Vyacheslav V. Chistyakov \cite{Ch06,Ch} introduced the concept of a \textbf{metric modular} on a set, inspired partly by the classical linear modulars on function spaces employed by Nakano  and other in the 1950s.  Our average speed function is an example of a modular in the sense of Chistyakov.

The concept of a modular on a vector space was introduced by {Nakano} in 1950 \cite{Nakano} and 
refined by Musielak and Orlicz in 1959 \cite{MO}.  In their formulation, a \textbf{modular} 
on a vector space $X$ is a functional $\rho\colon X\to [0,\infty]$ satisfying \begin{enumerate}
	\item 
	$\rho(x)=0$ if and only if $x=0$;
	\item
	$\rho(-x) = \rho(x)$;
	\item
	$\rho(ax+by)\le \rho(x)+\rho(y)$ if $a,b\ge 0$ and $a+b=1$.
	
\end{enumerate} 
A modular $m$ is said to be \textbf{convex} if, 
instead of (3), it satisfies the stronger property
\begin{enumerate}
	\item[(3')] 
	$m(ax+by)\le am(x)+bm(y)$ if $a,b\ge 0$ and $a+b=1$.	
\end{enumerate}
Given a modular $m$ on $X$, the \textbf{modular space}
is defined by
$$
X_m = \{x\in X: m(ax)\to 0 \text{ as } a\to 0\}
$$
It is possible to define a corresponding F-norm (or a norm when $m$ is convex) on the modular space.  The Orlicz spaces $L^\varphi$  are examples of this construction \cite{Rao}.

The notion of a metric modular on an arbitrary set
was introduced in 2006 by Chistyakov as a generalization 
of these ideas.  Chistyakov's formulation provides a good framework in which to study fixed point phenomena.

\section{Modular metric spaces}
We start with the definition given by Chistyakov.
\begin{defi}
	Let $X$ be a nonempty set. A \textbf{metric modular} on $X$
	is a function
	%$\lambda \in (0,\infty)$, functions $w\colon (0,\infty) \times X \times X \to [0,\infty)$ will be written as $w_\lambda(x,y)=w(\lambda,x,y)$ for all $\lambda>0$ and $x,y \in X$.
	$$ w \colon  (0,\infty)\times X \times X\to [0,\infty] \,, $$
	written as  $(\lambda,x,y)\mapsto w_\lambda(x,y)$, that satisfies the following axioms 
	for all $x,y,z\in X$:
	
	\begin{enumerate}
		\item
		$w_\lambda(x,y)=0 $ if and only if   $x=y$;
		
		\smallskip
		\item
		$w_\lambda(x,y)=w_\lambda(y,x)$  for all $\lambda>0$;
		
		\smallskip
		\item
		$w_{\lambda+\mu}(x,y)\le w_\lambda(x,z)+w_\mu(y,z)$  for all ${\lambda,\mu}>0$.
	\end{enumerate}
\end{defi}

If the context is clear, we  refer to a metric modular simply as a \textbf{modular}.
A modular $w$ is said to be \textbf{strict} if 
it has the following property: given $x,y\in X$ with $x\neq  y $, we have $w_\lambda (x,y) > 0 $ for all $\lambda>0$. 
\bigskip 

A modular $ w$ on $ X$ is said to be \textbf{convex} if, instead if (3),  it satisfies the stronger inequality
\begin{equation}\label{conv}
w_{\lambda+\mu}(x,y) \le \frac{\lambda}{\lambda+\mu}w_\lambda(x,z)+\frac{\mu}{\mu+\lambda}w_\mu(z,y),
\end{equation}
for all $ \lambda,\mu >0 $ and $ x,y,z \in X $.

%\textbf{Examples}

\medskip
Let $(X,d)$ be a metric space with at least two points. There are several ways to define a metric modular on $X$. %Let $ \lambda >0$  and $x,y\in X$.

\begin{example}
	$$w_\lambda(x,y)=d(x,y)$$
	In this case, property (3) in the definition of a 
	modular is just the triangle inequality for the metric.
	This modular is not convex - just take $z=y$ and $\mu = \lambda$ in (\ref{conv}).
\end{example}	
\begin{example}
	$$w_\lambda(x,y)=\frac{d(x,y)}{\lambda}$$
	In this case, we can think of $w_\lambda(x,y)$
	as the average velocity required to travel from $x$ 
	to $y$ in time $\lambda$. A simple calculation 
	with the triangle inequality shows that this modular
	is convex. 	
\end{example}
\begin{example}
	$$
	w_\lambda(x,y)=
	\begin{cases}
	\infty\quad\text{if}\quad\lambda<d(x,y),\\
	0\quad\text{if}\quad\lambda\ge d(x,y)
	\end{cases}
	$$
	This simple example could be seen as an extreme case of the velocity metaphor --- 
	if the time available is less than $d(x,y)$, then it is impossible
	to travel from $x$ to $y$, but if the time
	is at least $d(x,y)$ then we can travel instantaneously. This modular is also convex.	
\end{example}

We now look at some of the basic properties of modulars.
\begin{proposition} \label{mod}
	Let $w$ be a modular on the set $X$.
	\begin{enumerate}
		\item[(a)] For every $x,y\in X$, the function $\lambda\mapsto
		w_\lambda(x,y)$ is non-increasing.

		\item[(b)]
		Let $w$ be a convex modular.
		For $x,y\in X$, if $w_\lambda(x,y)$ is finite
		for at least one value of $\lambda$, then
		$
		w_\lambda(x,y) \to 0$  as  $\lambda\to  \infty$
		and 
		$w_\lambda(x,y)\to \infty$  as  $\lambda\to 0+$.
		
		\item[(c)] If $w$ is a convex modular, then
		the function $v_\lambda(x,y)= \smash{\dfrac{w_\lambda(x,y)}{\lambda}}$ is a modular on $X$.
	\end{enumerate}
\end{proposition}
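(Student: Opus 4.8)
The plan is to prove the three parts in order, since each builds naturally on the modular axioms and, for (b) and (c), on convexity.

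For part (a), I would fix $x,y\in X$ and take $\lambda>\mu>0$. The idea is to apply axiom (3) with an intermediate point chosen to be $y$ itself: write $\lambda = \mu + (\lambda-\mu)$ and use $w_{\mu+(\lambda-\mu)}(x,y) \le w_\mu(x,y) + w_{\lambda-\mu}(y,y)$. By axiom (1), $w_{\lambda-\mu}(y,y)=0$, so $w_\lambda(x,y)\le w_\mu(x,y)$, which is exactly monotonicity. This is the easy warm-up step.

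For part (b), assume $w$ is convex and $w_{\lambda_0}(x,y)<\infty$ for some $\lambda_0$. To show $w_\lambda(x,y)\to 0$ as $\lambda\to\infty$, I would iterate the convexity inequality (\ref{conv}) with $z=y$ (or $z=x$): taking $\mu=\lambda$ and $z=y$ in (\ref{conv}) gives $w_{2\lambda}(x,y)\le \tfrac12 w_\lambda(x,y)$, and more generally $w_{2^n\lambda_0}(x,y)\le 2^{-n} w_{\lambda_0}(x,y)\to 0$; combining this with the monotonicity from part (a) forces $w_\lambda(x,y)\to 0$ along all $\lambda\to\infty$, not just the dyadic subsequence. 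For the behavior as $\lambda\to 0+$, I would run the inequality the other way: from $w_{2\lambda}(x,y)\le\tfrac12 w_\lambda(x,y)$ we get $w_\lambda(x,y)\ge 2 w_{2\lambda}(x,y)$, so $w_{2^{-n}\lambda_0}(x,y)\ge 2^n w_{\lambda_0}(x,y)$, and again monotonicity upgrades the dyadic limit to the full limit $w_\lambda(x,y)\to\infty$. A small point to be careful about is that $w_{\lambda_0}(x,y)$ could be $0$ only if $x=y$, in which case everything is trivially $0$; the interesting case is $0<w_{\lambda_0}(x,y)<\infty$.

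For part (c), I need to verify that $v_\lambda(x,y)=w_\lambda(x,y)/\lambda$ satisfies the three modular axioms. Axioms (1) and (2) are immediate since dividing by $\lambda>0$ does not affect vanishing or symmetry. The content is axiom (3): I must show $v_{\lambda+\mu}(x,y)\le v_\lambda(x,z)+v_\mu(z,y)$, i.e. $\frac{w_{\lambda+\mu}(x,y)}{\lambda+\mu}\le \frac{w_\lambda(x,z)}{\lambda}+\frac{w_\mu(z,y)}{\mu}$. This follows directly from the convexity inequality (\ref{conv}): dividing (\ref{conv}) through by $\lambda+\mu$ gives $\frac{w_{\lambda+\mu}(x,y)}{\lambda+\mu}\le \frac{1}{\lambda+\mu}\big(\tfrac{\lambda}{\lambda+\mu}w_\lambda(x,z)+\tfrac{\mu}{\lambda+\mu}w_\mu(z,y)\big)$, and one checks that $\frac{\lambda}{(\lambda+\mu)^2}\le\frac{1}{\lambda}$ and $\frac{\mu}{(\lambda+\mu)^2}\le\frac{1}{\mu}$, which are both clear since $\lambda,\mu\le\lambda+\mu$. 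So the main obstacle, such as it is, is just bookkeeping with these elementary fractional inequalities; the conceptual work is entirely in seeing that convexity of $w$ is precisely the right hypothesis. I would also remark that one does not expect $v$ to be convex in general, mirroring Example 1 versus Example 2.
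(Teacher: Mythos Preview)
Your argument is correct and follows the same outline as the paper's. The only difference worth noting is in part (b): the paper observes, by taking $z=y$ in the convexity inequality with arbitrary $\mu>0$, the single estimate $w_{\lambda'}(x,y)\le \dfrac{\lambda}{\lambda'}\,w_\lambda(x,y)$ for all $\lambda'>\lambda$, which gives both limits immediately without the dyadic iteration and appeal to monotonicity; your doubling argument is a special case of this and works just as well, merely with an extra step.
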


\begin{proof}  \mbox{}\\
	(a)  follows from property (3) of modulars, taking $z=y$.\\
	(b) Taking $z=y$ in equation (\ref{conv}) gives
	$$
	w_{\lambda'}(x,y) \le \frac{\lambda}{\lambda'} w_\lambda(x,y)
	$$
	whenever $\lambda'>\lambda$.  
	Choosing $\lambda$ for which $w_\lambda(x,y) $ is finite, we see that
	$w_{\lambda'}(x,y)\to 0$ as $\lambda'\to\infty$.
	And if we fix $\lambda'$ such that
	$w_{\lambda'}(x,y)<\infty$, we get $w_\lambda(x,y)\to \infty$ as 
	$\lambda \to 0+$.\\
	(c) It is obvious that $v$ satisfies the first two properties of a modular Property (3) for $v$ follows easily from the convexity condition
	on $w_\lambda$.
\end{proof}

\section{Modular sets and modular convergence}
Given a modular $w$ on $X$  and a point $x_0$ in $X$, the two sets 
$$
X_w(x_0)=\{x\in X:w_\lambda(x,x_0)\to 0 \text{ as }
\lambda \to \infty\}
$$
and
$$
X^*_w(x_0)=\{x\in X:\exists\lambda>0 \text{ such that }\ w_\lambda(x,x_0)<\infty\}
$$
are each known as \textbf{Modular Sets} around $x_0$.
These sets can be thought of as comprising all the points that are ``accessible" in some sense from $x_0$. 
In both cases, the modular sets form a partition of $X$.  In our motivating example, the modular sets are the individual  land masses.

It is clear that
$ X_w(x_0) \subset X_w^*(x_0)$ in general and Example 1 shows that this inclusion may be strict.  
While there is some ambiguity in using the same term for these two types of sets, 
in the sequel we shall only be working  with modular sets of the form $X^{*}_{w}(x_0)$. 

\begin{proposition}
	If $w$ is a convex modular on $X$, then 
	$$ X_w(x_0) = X_w^*(x_0)\,.$$
\end{proposition}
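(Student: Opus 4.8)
The plan is to deduce this directly from Proposition~\ref{mod}(b), since the only nontrivial content is the inclusion $X_w^*(x_0)\subseteq X_w(x_0)$. Recall that the reverse inclusion $X_w(x_0)\subseteq X_w^*(x_0)$ holds for an arbitrary modular: if $w_\lambda(x,x_0)\to 0$ as $\lambda\to\infty$, then in particular $w_\lambda(x,x_0)<\infty$ for all sufficiently large $\lambda$, so $x\in X_w^*(x_0)$. This part requires nothing about convexity and can be dispatched in one line.

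For the substantive direction, I would take $x\in X_w^*(x_0)$, so that by definition there exists some $\lambda_0>0$ with $w_{\lambda_0}(x,x_0)<\infty$. The point $x$ together with $x_0$ then satisfies the hypothesis of Proposition~\ref{mod}(b): $w$ is convex and $w_\lambda(x,x_0)$ is finite for at least one value of $\lambda$. Hence Proposition~\ref{mod}(b) gives $w_\lambda(x,x_0)\to 0$ as $\lambda\to\infty$, which is precisely the condition for $x\in X_w(x_0)$. Combining the two inclusions yields $X_w(x_0)=X_w^*(x_0)$.

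There is no real obstacle here; the work has already been done in Proposition~\ref{mod}(b), whose proof uses the convexity inequality~(\ref{conv}) with $z=y$ to obtain $w_{\lambda'}(x,x_0)\le (\lambda/\lambda')\,w_\lambda(x,x_0)$ for $\lambda'>\lambda$, forcing the limit to be $0$. If one wished to make the argument self-contained one could simply reproduce that one-line estimate here, but it is cleaner to cite part (b). The only thing worth emphasising is that convexity is genuinely needed: Example~1 shows that for the (non-convex) modular $w_\lambda(x,y)=d(x,y)$ one has $X_w^*(x_0)=X$ while $X_w(x_0)=\{x_0\}$ whenever $d$ is unbounded, so the two modular sets can differ in the absence of~(\ref{conv}).
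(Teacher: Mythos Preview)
Your proof is correct and follows exactly the paper's approach: the paper's own proof is the single line ``This follows immediately from Proposition~\ref{mod}(b),'' and you have simply unpacked that citation into the two inclusions. One minor quibble with your closing remark: in Example~1 one has $X_w(x_0)=\{x_0\}$ for \emph{any} metric $d$ with at least two points, bounded or not, since $w_\lambda(x,x_0)=d(x,x_0)$ is constant in $\lambda$; but this does not affect the proof itself.
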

\begin{proof}
	This follows immediately from Proposition \ref{mod} (b).
\end{proof} 

We now turn our attention to some notions of convergence.

\begin{defi}
	Let $w$ be a modular on $X$.		
	A sequence  $(x_n)$ in $X$ is said to be  \textbf{$w$-convergent} 
	(or \textbf{modular convergent}) to an element $x \in X$ if there exists 
	a number $ \lambda >0$, possibly depending on $ (x_n)$ and $x$, such 
	that $\lim_{n \to \infty}w_\lambda(x_n,x)=0$
	A sequence $(x_n)$ in $X$ is said to be \textbf{$w$--Cauchy} 
	if there exists $\lambda>0$, possibly
	depending on the sequence, such that $w_\lambda(x_m, x_n) \to 0$ as 
	$m,n \to \infty$. $X$ is said to be \textbf{$w$--complete} 
	if every $w$--Cauchy sequence is $w$--convergent in the more 
	precise sense that if
	$w_\lambda(x_m,x_n)\to 0$ as $m,n\to\infty$ for some 
	$\lambda>0$, then there exists $x\in X$
	such that $w_\lambda(x_n,x)\to 0$ as $n\to \infty$
	(with the same  $\lambda$.)
\end{defi}

\textbf{Metrics on the modular set}

Let $w$ be a modular on $ X$ and let $X_w$  be any one
of the modular sets defined by $w$.  Then the formula
$$
d_w(x,y)=\inf\{\lambda>0: w_\lambda(x,y)\le \lambda\},
\quad \forall x,y \in X_w 
$$
defines a metric on $X_w$ \cite{Ch}.

If the modular $w$ is convex, then the modular space can be endowed with another a metric $ d^*_w $ given by 
$$
d_w^*(x,y)=\inf\{\lambda>0: w_\lambda(x,y)\le 1\}\,. 
$$
These metrics on the modular set are strongly equivalent:
$$ 
d_w \le d^*_w \le 2 d_w \,.
$$ 
We refer to \cite{Ch} for details.
\bigskip

The following result shows the relationship between modular and metric convergence.  The proof can be found in \cite{Ch}.

\begin{proposition}
	
	Let $w$ be a convex modular on $X$. 
	Given a sequence $x_n$ for $X_w^*(x_0)$ and an element 
	$x \in X_w^*(x_0)$, we have:
	$$\lim_{n \to \infty} d_w^*(x_n,x)=0 \iff  \lim_{n \to \infty}w_\lambda(x_n,x)=0 \text{ for every $\lambda>0$.}
	$$
	
\end{proposition}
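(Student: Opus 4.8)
The plan is to prove the two implications separately, using the strong equivalence $d_w \le d_w^* \le 2d_w$ together with the defining property of $d_w^*$ and the monotonicity of $\lambda \mapsto w_\lambda(x,y)$ from Proposition \ref{mod}(a).

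For the forward implication, suppose $d_w^*(x_n,x) \to 0$. Fix $\lambda > 0$; I want $w_\lambda(x_n,x) \to 0$. For all $n$ large enough that $d_w^*(x_n,x) < \lambda$, the definition of $d_w^*$ as an infimum, combined with the non-increasing property of $\mu \mapsto w_\mu(x_n,x)$, gives $w_\lambda(x_n,x) \le w_{\mu}(x_n,x) \le 1$ for a suitable $\mu$ slightly larger than $d_w^*(x_n,x)$; so the sequence is eventually bounded by $1$ at level $\lambda$. To upgrade ``bounded by $1$'' to ``tends to $0$'', I would use convexity: for any fixed small $\varepsilon$ with $0 < \varepsilon < \lambda$, apply the convexity inequality (\ref{conv}) with $z = x$ and exponents $\varepsilon$ and $\lambda - \varepsilon$ to get $w_\lambda(x_n,x) \le \tfrac{\varepsilon}{\lambda} w_\varepsilon(x_n,x)$, wait — more carefully, from (\ref{conv}) with the splitting $\lambda = \varepsilon + (\lambda-\varepsilon)$ and $z=x$ one obtains $w_\lambda(x_n,x) \le \tfrac{\varepsilon}{\lambda}\,w_\varepsilon(x_n,x)$ only after also bounding $w_{\lambda-\varepsilon}$, so instead the cleaner route is: once $d_w^*(x_n,x) =: \delta_n \to 0$, pick $\mu_n$ with $\delta_n < \mu_n < \lambda$ and $\mu_n \to 0$, so $w_{\mu_n}(x_n,x) \le 1$, and then by convexity with $z=x$, $w_\lambda(x_n,x) \le \tfrac{\mu_n}{\lambda} w_{\mu_n}(x_n,x) \le \tfrac{\mu_n}{\lambda} \to 0$. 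This is the main technical point, and it is where convexity is essential.

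For the reverse implication, suppose $w_\lambda(x_n,x) \to 0$ for every $\lambda > 0$. Given $\varepsilon > 0$, apply the hypothesis at the value $\lambda = \varepsilon$: there is $N$ with $w_\varepsilon(x_n,x) < 1$ for all $n \ge N$. By the definition of $d_w^*$ as an infimum, this gives $d_w^*(x_n,x) \le \varepsilon$ for all $n \ge N$, hence $d_w^*(x_n,x) \to 0$. This direction is elementary and uses only the definition of $d_w^*$.

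The main obstacle is the forward direction: knowing only that $d_w^*(x_n,x) \to 0$ tells us $w_\lambda(x_n,x) \le 1$ eventually at each level $\lambda$, which by itself does not give convergence to $0$ at a fixed $\lambda$. The resolution is to exploit that we may work at a shrinking level $\mu_n \to 0$ where the same bound $w_{\mu_n}(x_n,x) \le 1$ holds, and then transfer back up to the fixed $\lambda$ via the convexity inequality (\ref{conv}) with $z = x$, which converts the small multiplier $\mu_n/\lambda$ into the desired decay. One should also note at the outset that since $x_n, x \in X_w^*(x_0)$, all quantities $w_\lambda(x_n,x)$ are finite for $\lambda$ large, and by Proposition \ref{mod}(b) in fact the relevant infima defining $d_w^*$ are over nonempty sets, so $d_w^*$ is genuinely finite-valued here.
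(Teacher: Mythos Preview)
Your argument is correct. Note, however, that the paper itself does not supply a proof of this proposition: it simply refers the reader to Chistyakov~\cite{Ch}. The heart of your forward direction---choosing $\mu_n\searrow 0$ with $\mu_n>d_w^*(x_n,x)$ so that $w_{\mu_n}(x_n,x)\le 1$, and then invoking the convexity estimate $w_\lambda(x_n,x)\le(\mu_n/\lambda)\,w_{\mu_n}(x_n,x)$ already derived in the proof of Proposition~\ref{mod}(b)---is the standard route, and your reverse direction follows straight from the definition of $d_w^*$. One small remark: the strong equivalence $d_w\le d_w^*\le 2d_w$ that you list in your plan is never actually used in either implication and can be dropped.
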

Chistyakov gives an example to show that modular convergence is strictly weaker
than metric convergence in general \cite{ch1}

\section{Fixed Point Theorems}

In 2011, {Chistyakov} generalized the Banach fixed point theorem to the setting of modular metric spaces.  Consider the definition of a contraction on a metric space:
%introduced the notion of fixed point theorem in modular metric spaces and gave the statement of this theorem. We present the short proof of this theorem employing two formulas for modular metric space that are inspired by \textbf{Palais'} \cite{Richard} Fundamental Contraction Inequality.
$d(Tx,Ty) \le k d(x,y)$ for all $x$, $y$, where $k$ is some constant 
satisfying $0\le k <1$.
Looking at Examples 1 and 2, we see that there are at least two ways to generalise this 
to modular metric spaces.
Chistyakov gives two definitions:

\begin{defi}
	Let $w$ be a modular on a set  $X$ and let $X_w^*$ be a modular set.
	A mapping  $T \colon X_w^* \to X_w^* $  is said to be \textbf{ modular contractive}
	(or a \emph{$w$-contraction}) if there exist numbers $ k \in (0,1)$ and $\lambda_0>0$  such that 
	$$
	w_{k \lambda}(Tx,Ty)\le w_\lambda(x,y)
	$$ 
	for all $0 < \lambda \le \lambda_0$ and all  $x,y \in X_w^*$.
\end{defi}

His second definition is stronger:

\begin{defi}
	A mapping  $ T\colon X^*_w \to X^*_w $ is said to be \textbf{ strongly modular contractive}  (or a \emph{strong $w$-contraction})if there exist numbers $ 0 <k < 1 $  and $ \lambda_0>0 $ such that $$w_{k \lambda}(Tx,Ty) \le k w_\lambda (x,y)$$ for all $ 0 <\lambda \le \lambda_0 $ and  all $x,y \in X^*_w$.
\end{defi}
%This formula can also be written: $$ w_\lambda (Tx,Ty)\le k w_{k^{-1} \lambda }(x,y)$$
%\bigskip

Chistyakov proved fixed point theorems for modular contractive and 
strongly modular contractive mappings. 
%Rather than follow
%his proofs, our approach to these results 
%is inspired by Richard Palais's proof of the Banach fixed point theorem %\cite{Richard}.
We consider whether the approach taken by Palais
in his proof of the Banach fixed point theorem \cite{Richard}
can be 
adapted to prove Chistyakov's results.

Suppose that $T$ is a contraction on a metric space $(X,d)$ with contraction constant $k$.
Thus, we have $d(Tx,Ty)\le k d(x,y)$ for all $x$, $y\in X$.  Combining this with an application
of the triangle inequality to the points $x$, $y$, $Tx$ and $Ty$, we get the inequality
$$
d(x,y) \le \frac{d(x,Tx)+ d(y,Ty)}{1-k}\,.
$$
Palais called this the \textbf{Fundamental Contraction Inequality}.  It is a key ingredient in his proof, where it is used to establish the Cauchy property for the sequence generated by iterating the mapping $T$ on some initial point.

We begin with a variant of Palais's inequality for modular
contractive mappings.

\begin{proposition}[Fundamental Modular Contraction Inequality]
	Let $w$ be a convex  modular in $X$, let $T \colon X^*_w \to X^*_w$  be a modular contractive mapping, with
	$w_{k \lambda}(Tx,Ty)\le w_\lambda(x,y)$ for $0<\lambda \leq \lambda_0$. Let $\lambda_1, \lambda_2\ge0$, $\lambda_1 +\lambda_2=(1-k)\lambda$, where
	$0< \lambda <\lambda_0$. 
	Then 
	\begin{equation}\label{fund1}
	w_\lambda (x,y)\le \frac{\lambda_1 w_{\lambda_1}(x,Tx)+\lambda_2 w_{\lambda_2} (y,Ty)}{\lambda (1-k)}
	\end{equation} for every $x,y \in X_w^*$ such that $w_\lambda(x,y)<\infty$.
\end{proposition}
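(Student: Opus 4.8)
The plan is to telescope $w_\lambda(x,y)$ through the two intermediate points $Tx$ and $Ty$ using the convexity inequality (\ref{conv}), and then to absorb the term involving $w_{k\lambda}(Tx,Ty)$ back into the left-hand side by means of modular contractivity. The arithmetic hinge is the decomposition
$$
\lambda = \lambda_1 + k\lambda + \lambda_2,
$$
which is immediate from $\lambda_1 + \lambda_2 = (1-k)\lambda$; it dictates which times to use on the three legs of the path $x \to Tx \to Ty \to y$.

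Assume first that $\lambda_1, \lambda_2 > 0$. Applying (\ref{conv}) to $w_\lambda(x,y) = w_{\lambda_1 + (k\lambda+\lambda_2)}(x,y)$ with intermediate point $Tx$, and then applying (\ref{conv}) again to the resulting term $w_{k\lambda+\lambda_2}(Tx,y)$ with intermediate point $Ty$, the nested coefficients collapse (the common factors $k\lambda+\lambda_2$ cancel, leaving $k$ as the weight of the middle leg) and one obtains
$$
w_\lambda(x,y) \le \frac{\lambda_1}{\lambda}\, w_{\lambda_1}(x,Tx) + k\, w_{k\lambda}(Tx,Ty) + \frac{\lambda_2}{\lambda}\, w_{\lambda_2}(Ty,y).
$$
Since $0 < \lambda < \lambda_0$, the contraction hypothesis gives $w_{k\lambda}(Tx,Ty) \le w_\lambda(x,y)$, so the middle term is at most $k\, w_\lambda(x,y)$. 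As $w_\lambda(x,y) < \infty$ by assumption, we may subtract $k\, w_\lambda(x,y)$ from both sides to get $(1-k)\, w_\lambda(x,y) \le \frac{1}{\lambda}\bigl(\lambda_1 w_{\lambda_1}(x,Tx) + \lambda_2 w_{\lambda_2}(Ty,y)\bigr)$; dividing by $1-k > 0$ and rewriting $w_{\lambda_2}(Ty,y) = w_{\lambda_2}(y,Ty)$ via axiom (2) yields precisely (\ref{fund1}).

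The only genuine obstacle is the bookkeeping in the two-fold use of (\ref{conv}): the first split must be taken at $Tx$ and the second at $Ty$, so that the surviving middle term is exactly $w_{k\lambda}(Tx,Ty)$, to which the contractive inequality applies with the correct parameter $k\lambda$; and the finiteness of $w_\lambda(x,y)$ is indispensable, since otherwise cancelling $k\, w_\lambda(x,y)$ on both sides is meaningless. It then remains only to dispose of the degenerate cases $\lambda_1 = 0$ or $\lambda_2 = 0$, where one argument of (\ref{conv}) vanishes; these are handled by performing only the single remaining split (or by letting $\lambda_i \to 0+$) and present no difficulty.
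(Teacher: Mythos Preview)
Your argument is correct and follows the same route as the paper: decompose $\lambda=\lambda_1+k\lambda+\lambda_2$, pass through $Tx$ and $Ty$ via the convex inequality to obtain the three-term bound, then use contractivity and finiteness to absorb the middle term. If anything you are slightly more careful than the paper, since you spell out the two successive applications of (\ref{conv}) and note the degenerate cases $\lambda_i=0$, both of which the paper leaves implicit.
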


\begin{proof}
	By the convex property, taking $ \lambda=\lambda_1+k \lambda+\lambda_2$, we get
	$$ w_{\lambda_1+k \lambda +\lambda_2} (x,y) \le \frac{\lambda_1}{\lambda } w_{\lambda_1}(x,Tx)+ k w_{k \lambda}(Tx,Ty)+\frac{\lambda_2}{\lambda} w_{\lambda_2} (y,Ty)$$
	since $ \lambda_1 +\lambda_2 = (1-k)\lambda $ and $ w_{k \lambda} (Tx,Ty) \le w_\lambda (x,y)$.
	
	Therefore, 
	$$ w_\lambda(x,y) \le \frac{\lambda_1}{\lambda} w_{\lambda_1} (x,Tx)+k w_\lambda (x,y)+\frac{\lambda_2}{\lambda} w_{\lambda_2}(y,Ty)\,.$$
	Hence
	$$ w_\lambda (x,y)= \frac{\lambda_1 w_{\lambda_1}(x,Tx)+\lambda_2 w_{\lambda_2} (y,Ty)}{\lambda (1-k)} \,.$$
\end{proof}

We now give  the first  fixed point theorem on modular metric spaces by Chistyakov.
Palais' proof of the Banach fixed point theorem makes use
of the Fundamental Contraction Inequality.
Unfortunately, the finiteness condition
in our Fundamental Modular Contraction Inequality 
prevents us from taking the Palais approach to prove 
Chistyakov's fixed point theorem.

\begin{theorem}[Chistyakov \cite{ch1}]
	Let $ w$ be a strict convex modular on $X$ such that the modular 
	space $X^*_w $ is $w$--complete and let $ T\colon X^*_w \to X^*_w $ be a 
	$w$-contractive map such that for each $\lambda >0 $ there exists an 
	$x=x(\lambda) \in X^*_w $ such that $w_\lambda(x,Tx) < \infty $.
	Then $T$ has a fixed point $x_*$ in $X^*_w $. If the modular $w$ assumes only finite values on $X ^*_w$, then the condition $ w_\lambda(x,Tx)< \infty $ is redundant, the fixed point $x_*$ of $T$ is unique and for each $x_0 \in X^*_w $ the sequence of iterates ${T^n x_0}$ is modular convergent to $x_*$. 		
\end{theorem}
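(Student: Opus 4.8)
The plan is to follow the classical Banach iteration scheme, but carried out at the level of the modular $w$ rather than the induced metric $d_w^*$, since the Fundamental Modular Contraction Inequality only applies where $w_\lambda$ is finite. Fix $x_0\in X_w^*$. Choose $\lambda_0$ as in the definition of $w$-contraction, and pick $\lambda$ with $0<\lambda<\lambda_0$ together with a point $x=x(\lambda)$ such that $w_\lambda(x,Tx)<\infty$. The first step is to show that the orbit of \emph{this particular starting point} has finite modular at scale $\lambda$: applying $w_{k\mu}(Tu,Tv)\le w_\mu(u,v)$ repeatedly, one gets $w_{k^n\lambda}(T^nx,T^{n+1}x)\le w_\lambda(x,Tx)<\infty$, and since $k^n\lambda\le\lambda$, monotonicity (Proposition \ref{mod}(a)) gives $w_\lambda(T^nx,T^{n+1}x)\le w_{k^n\lambda}(T^nx,T^{n+1}x)\le w_\lambda(x,Tx)$ for all $n$. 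I would then sharpen this: using convexity to split the scale, one shows $w_{k^n\lambda}(T^nx,T^{n+1}x)$ actually decreases geometrically, so that $\sum_n k^n\lambda\,w_{k^n\lambda}(T^nx,T^{n+1}x)<\infty$.

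The second step is the Cauchy estimate. For $m<n$, write $w_\lambda(T^mx,T^nx)$ and apply convexity (inequality (\ref{conv})) iteratively through the intermediate iterates $T^{m+1}x,\dots,T^{n-1}x$, choosing the scale splitting so that each term appears as $w_{\mu_j}(T^jx,T^{j+1}x)$ with $\sum_j\mu_j=\lambda$ and each $\mu_j$ a suitable multiple of $k^{j-m}$. The geometric decay from Step 1, together with the fact that $w_\lambda$ is finite along the orbit, then forces $w_\lambda(T^mx,T^nx)\to 0$ as $m,n\to\infty$. Hence $(T^nx)$ is $w$-Cauchy at scale $\lambda$, and by $w$-completeness there is $x_*\in X_w^*$ with $w_\lambda(T^nx,x_*)\to 0$.

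The third step is to check that $x_*$ is a fixed point. From $w_{k\lambda}(Tu,Tv)\le w_\lambda(u,v)$ and monotonicity, $w_\lambda(T^{n+1}x,Tx_*)\le w_{k\lambda}(T^{n+1}x,Tx_*)\le w_\lambda(T^nx,x_*)\to 0$, so $T^nx\to Tx_*$ at scale $\lambda$ as well. Now use axiom (3): $w_{2\lambda}(x_*,Tx_*)\le w_\lambda(x_*,T^nx)+w_\lambda(Tx_*,T^nx)\to 0$, so $w_{2\lambda}(x_*,Tx_*)=0$, and by axiom (1) this gives $Tx_*=x_*$. For the final clauses, assume $w$ is finite-valued on $X_w^*$. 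Then the hypothesis $w_\lambda(x,Tx)<\infty$ is automatic for every starting point, so the argument above applies verbatim to an arbitrary $x_0$, giving modular convergence of $T^nx_0$ to $x_*$. Uniqueness follows from strictness: if $x_*,y_*$ are both fixed, pick $\mu\le\lambda_0$ with $w_\mu(x_*,y_*)<\infty$; then $w_{k\mu}(x_*,y_*)=w_{k\mu}(Tx_*,Ty_*)\le w_\mu(x_*,y_*)$, and iterating with monotonicity pins $w_{k^n\mu}(x_*,y_*)$ between $w_{k\mu}(x_*,y_*)$ and $w_\mu(x_*,y_*)$ while convexity drives it to $0$; strictness then forces $x_*=y_*$.

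I expect the main obstacle to be Step 2 — organizing the telescoping through intermediate iterates so that the scale parameters $\mu_j$ sum correctly to $\lambda$ while keeping each term controlled by the geometric series from Step 1. The finiteness restriction in the Fundamental Modular Contraction Inequality means one cannot simply quote it for an arbitrary pair; one must verify finiteness propagates along the orbit first, which is why Step 1 must precede the Cauchy estimate. The convexity of $w$ is essential throughout: it is what converts the contraction on the scale $\lambda\mapsto k\lambda$ into genuine geometric decay of the modular values, and without it (mere axiom (3)) the series in Step 1 need not converge.
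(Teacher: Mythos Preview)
The paper does not actually contain a proof of this theorem. It is stated with attribution to Chistyakov \cite{ch1}, and the text immediately preceding it says explicitly that ``the finiteness condition in our Fundamental Modular Contraction Inequality prevents us from taking the Palais approach to prove Chistyakov's fixed point theorem.'' There is therefore nothing in the paper to compare your argument against; the authors simply quote the result and defer to the cited preprint for the proof.

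Your proposal is thus an independent argument rather than a reconstruction of anything in the paper. The strategy you outline --- iterate from a point $x$ with $w_\lambda(x,Tx)<\infty$, combine the contraction with convexity to force geometric decay along the orbit, telescope through intermediate iterates to get the $w$-Cauchy property, then use completeness and strictness --- is essentially the route one is forced into once the Palais shortcut is blocked, and is in the spirit of Chistyakov's own proof. Two places deserve tightening. First, in Step~1 it is $w_\lambda(T^nx,T^{n+1}x)$ that decays like $k^n w_\lambda(x,Tx)$ (via the convexity bound $w_\lambda\le (k^n\lambda/\lambda)\,w_{k^n\lambda}$), whereas $w_{k^n\lambda}(T^nx,T^{n+1}x)$ itself is only bounded by $w_\lambda(x,Tx)$; your phrasing blurs this. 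Second, in the uniqueness step the clean contradiction is that convexity gives $w_{k^n\mu}(x_*,y_*)\ge k^{-n}w_\mu(x_*,y_*)\to\infty$ while the contraction (applied to the fixed points) gives $w_{k^n\mu}(x_*,y_*)\le w_\mu(x_*,y_*)<\infty$; your description of this as being ``pinned'' is correct in substance but opaque as written. Your own caveat about Step~2 is well placed: the choice of scale parameters $\mu_j$ summing to $\lambda$ while each stays below $\lambda_0$ and matches the $k^j$ decay is the genuine bookkeeping burden of the argument.
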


Chistyakov's second fixed point theorem drops the convexity assumption on the modular, replacing it with the requirement that the mapping be strongly contractive.
We have another variation of Palais's inequality for these mappings.

\begin{proposition}[Fundamental Strong Modular
	Contraction Inequality]
	Let $w$ be a modular on $X$ and let $ T \colon X_w^* \to X_w^*$ be strongly modular contractive mapping,
	with
	$w_{k \lambda}(Tx,Ty)\le kw_\lambda(x,y)$ for $0<\lambda \leq \lambda_0$. Let $\lambda_1,\lambda_2 \ge0$, $\lambda_1+\lambda_2=(1-k)\lambda$, $ 0<\lambda<\lambda_0$.  Then
	\begin{equation}\label{fund2} w_{\lambda}(x,y)\le \frac {w_{\lambda_1}(x,Tx)+w_{\lambda_2}(y,Ty)} {1-k} 
	\end{equation}
	for every $x,y \in X_w^*$ such that $w_\lambda(x,y)<\infty$.
\end{proposition}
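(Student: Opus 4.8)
The plan is to run the same argument as in the proof of the (convex) Fundamental Modular Contraction Inequality, but with the plain triangle-type axiom (3) used twice in place of the convexity inequality (\ref{conv}). First I would split the parameter as
\[
\lambda = \lambda_1 + k\lambda + \lambda_2,
\]
which is legitimate precisely because $\lambda_1+\lambda_2=(1-k)\lambda$. Applying axiom (3) to the pair $x,y$ with intermediate point $Tx$ gives
\[
w_\lambda(x,y)=w_{\lambda_1+(k\lambda+\lambda_2)}(x,y)\le w_{\lambda_1}(x,Tx)+w_{k\lambda+\lambda_2}(y,Tx).
\]
Then, using the symmetry axiom (2) and axiom (3) again on $w_{\lambda_2+k\lambda}(y,Tx)$ with intermediate point $Ty$, I would obtain
\[
w_{k\lambda+\lambda_2}(y,Tx)\le w_{\lambda_2}(y,Ty)+w_{k\lambda}(Tx,Ty).
\]
Combining these two steps yields
\[
w_\lambda(x,y)\le w_{\lambda_1}(x,Tx)+w_{\lambda_2}(y,Ty)+w_{k\lambda}(Tx,Ty).
\]

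Next I would invoke strong modular contractivity. Since $0<\lambda<\lambda_0$, the inequality $w_{k\lambda}(Tx,Ty)\le k\,w_\lambda(x,y)$ applies, so
\[
w_\lambda(x,y)\le w_{\lambda_1}(x,Tx)+w_{\lambda_2}(y,Ty)+k\,w_\lambda(x,y).
\]
Here is where the finiteness hypothesis $w_\lambda(x,y)<\infty$ is essential: it allows me to subtract $k\,w_\lambda(x,y)$ from both sides without running into an $\infty-\infty$ ambiguity, giving $(1-k)\,w_\lambda(x,y)\le w_{\lambda_1}(x,Tx)+w_{\lambda_2}(y,Ty)$, and dividing by $1-k>0$ produces (\ref{fund2}).

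The only real subtlety, and the step I would be most careful about, is the bookkeeping in the two applications of axiom (3): making sure the arguments and parameters are paired correctly and that the symmetry axiom is used where needed so that the cross term comes out exactly as $w_{k\lambda}(Tx,Ty)$, to which the strong contraction estimate can be applied. The finiteness condition then does the rest of the work, exactly as in the convex case; note that, in contrast to (\ref{fund1}), the absence of convexity is what prevents the parameters $\lambda_1,\lambda_2$ and $\lambda$ from appearing as weights in the final bound.
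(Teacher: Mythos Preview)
Your proof is correct and follows essentially the same route as the paper: decompose $\lambda=\lambda_1+k\lambda+\lambda_2$, apply the modular triangle inequality (axiom (3)) to insert $Tx$ and $Ty$, then use strong contractivity and subtract. You simply spell out the two successive applications of axiom (3) and the role of the finiteness hypothesis more explicitly than the paper does, which presents the three-term inequality in one line.
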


\begin{proof}	
	By property (3) in the definition of a modular, we get	
	$$w_{\lambda_1+k\lambda +\lambda_2}(x,y) \le w_{\lambda_1}(x,Tx)+w_{k\lambda} (Tx,Ty)+ w_{\lambda_2}(y,Ty) $$
	Using $ \lambda_1+\lambda_2 =(1-k)\lambda $ with the  strong contractive property of $T$, 
	$$w_\lambda(x,y)\le w_{\lambda_1}(x,Tx)+kw_\lambda(x,y)+w_{\lambda_2}(y,Ty)$$ 
	and so 
	$$w_\lambda(x,y)\le \frac{w_{\lambda_1}(x,Tx)+w_{\lambda_2}(y,Ty)}{1-k}$$
	for all $x,y \in X^*_w$.
\end{proof}
The finiteness condition in the Fundamental Strong Contraction Modular Inequality
prevents us from using it to prove Chistyakov's second fixed point theorem.

\begin{theorem}[Chistyakov \cite{ch1}]
	Let $w$ be a strict modular on $X$ such that $X_w$ is $w$-complete, 
	let $T\colon X^*_w \to X^*_w $ be a strongly $w$-contractive map such 
	that for each $\lambda >0$  there exists an $x=x(\lambda)\in X^*_w$ 
	such that $w_\lambda (x,Tx)<\infty$ holds. Then $T$ has a fixed point 
	$x_*$ in $X_w^*$. If, in addition, $w$ is finite valued on $X^*_w$, 
	then the condition $w_\lambda (x,Tx)<\infty $ is redundant, the fixed 
	point $x_*$ of $T$ is unique and for each $x_0 \in X^*_w $ the sequence 
	of iterates ${T^n x_0}$ is modular convergent to $x_*$.	
\end{theorem}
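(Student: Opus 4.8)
The plan is to imitate Picard iteration directly, tracking the orbit of a suitable starting point rather than routing through inequality~(\ref{fund2}), whose finiteness hypothesis is exactly what obstructs the Palais-style argument. First I would fix $\lambda$ with $0<\lambda<\lambda_0$ and use the hypothesis to pick $x_0=x(\lambda)\in X^*_w$ with $w_\lambda(x_0,Tx_0)<\infty$; when $w$ is finite valued this holds for \emph{every} $x_0$, which is what will eventually give the statement about iterates. Setting $x_n=T^nx_0$ and noting $0<k^n\lambda\le\lambda\le\lambda_0$, a one-line induction on the strong contraction property gives
\[
w_{k^n\lambda}(x_n,x_{n+1})\le k^n\,w_\lambda(x_0,Tx_0)\qquad(n\ge 0).
\]

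Next I would establish the $w$-Cauchy property. For $m<n$ put $s_{m,n}=\sum_{j=m}^{n-1}k^j\lambda$, so $s_{m,n}\le k^m\lambda/(1-k)$. Telescoping through $x_{m+1},\dots,x_{n-1}$ with axiom (3) yields
\[
w_{s_{m,n}}(x_m,x_n)\le\sum_{j=m}^{n-1}w_{k^j\lambda}(x_j,x_{j+1})\le\frac{k^m}{1-k}\,w_\lambda(x_0,Tx_0).
\]
The time parameter $s_{m,n}$ varies with $m,n$, so I would appeal to the monotonicity in Proposition~\ref{mod}(a): once $m$ is large enough that $k^m\le 1-k$ we have $s_{m,n}\le\lambda$, hence $w_\lambda(x_m,x_n)\le k^m w_\lambda(x_0,Tx_0)/(1-k)\to 0$. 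Thus $(x_n)$ is $w$-Cauchy with parameter $\lambda$, and $w$-completeness of $X^*_w$ produces $x_*\in X^*_w$ with $w_\lambda(x_n,x_*)\to 0$ for the same $\lambda$.

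To see $x_*$ is fixed, for each $n$ I would use axiom (3) with intermediate point $x_{n+1}=Tx_n$, symmetry, and the strong contraction property:
\[
w_{(1+k)\lambda}(x_*,Tx_*)\le w_\lambda(x_*,x_{n+1})+w_{k\lambda}(Tx_n,Tx_*)\le w_\lambda(x_*,x_{n+1})+k\,w_\lambda(x_n,x_*).
\]
For large $n$ the right side is finite and tends to $0$, while the left side is independent of $n$, so $w_{(1+k)\lambda}(x_*,Tx_*)=0$ and strictness of $w$ forces $Tx_*=x_*$. For uniqueness when $w$ is finite valued: if $Tx_*=x_*$ and $Ty_*=y_*$ then for $0<\lambda<\lambda_0$ the monotonicity and the strong contraction give $w_\lambda(x_*,y_*)\le w_{k\lambda}(x_*,y_*)=w_{k\lambda}(Tx_*,Ty_*)\le k\,w_\lambda(x_*,y_*)$, and since $w_\lambda(x_*,y_*)<\infty$ and $k<1$ this value is $0$, whence $x_*=y_*$ by strictness. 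Finally, in the finite-valued case the selection condition is vacuous, so the orbit of an arbitrary $x_0\in X^*_w$ is $w$-Cauchy by the argument above, $w$-converges to a fixed point, and that fixed point is $x_*$ by uniqueness.

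The principal obstacle is the one already noted in the text: the finiteness condition prevents the clean Palais proof via~(\ref{fund2}), so the Cauchy estimate must be produced by hand from the orbit. Inside that estimate the delicate points are keeping every argument fed to the contraction inside $(0,\lambda_0]$, coping with the drifting parameter $s_{m,n}$ (for which Proposition~\ref{mod}(a) is precisely the tool), and invoking the triangle inequality only where both sides are finite — this last restriction is why uniqueness and convergence of all iterates require the finite-valued hypothesis.
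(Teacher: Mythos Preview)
Your argument is correct, but there is nothing in the paper to compare it against: the paper does not prove this theorem. It states the result, attributes it to Chistyakov~\cite{ch1}, and explicitly remarks just before the statement that the finiteness hypothesis in inequality~(\ref{fund2}) obstructs a Palais-style proof; no alternative argument is supplied. Your direct Picard iteration --- the inductive bound $w_{k^n\lambda}(x_n,x_{n+1})\le k^n w_\lambda(x_0,Tx_0)$, the telescoped estimate at parameter $s_{m,n}$, and the use of Proposition~\ref{mod}(a) to pass to the fixed parameter $\lambda$ once $k^m\le 1-k$ --- is the standard route and presumably close to what Chistyakov does in the cited preprint. The steps verifying $Tx_*=x_*$ via strictness and uniqueness via $w_\lambda(x_*,y_*)\le w_{k\lambda}(x_*,y_*)\le k\,w_\lambda(x_*,y_*)$ are clean and use the finite-valued hypothesis exactly where it is needed.
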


\section{Applications}
Electrorheological fluids are liquids that rapidly solidify in the presence of an electric field.  
They are often studied using Sobolev spaces with a variable exponent \cite{MR}.
It has been suggested that  modular metric spaces may be useful in modelling them \cite{AK}.

Recent work indicates that modular metric space fixed point results are well adapted to certain types of differential equations \cite{Ch}.  Finally, 
we refer to \cite{ChII} for a detailed study of nonlinear superposition operators on modular metric spaces of functions.

\bigskip
\noindent
\textbf{Acknowledgement}\\
This article is taken from the first author's MSc thesis,
which was written under the supervision of the second author.
The first author is grateful to Dr Ryan and to the staff of the School of 
Mathematics at NUI Galway for all the support and encouragement they gave her.
The work of the first author was supported by a grant from the Ministry of Education of Libya, to whom she expresses her gratitude.

%\begin{thebibliography}{11}
	% replace 11 by 1 if fewer than 10 bibitems, 
	% and by 111 if more than 100.
%\end{thebibliography}

\providecommand{\bysame}{\leavevmode\hbox to3em{\hrulefill}\thinspace}
\providecommand{\MR}{\relax\ifhmode\unskip\space\fi MR }
% \MRhref is called by the amsart/book/proc definition of \MR.
\providecommand{\MRhref}[2]{%
	\href{http://www.ams.org/mathscinet-getitem?mr=#1}{#2}
}
\providecommand{\href}[2]{#2}

\bigskip
\noindent
{\small
\authorbio
}


\begin{thebibliography}{10}
	\bibitem{AR}
	H.~Abobaker and R. A. Ryan,
	\emph{Modular metric spaces},  Bulletin~Irish Math.~Soc.~(2017), No.~80, 35--44.
	\bibitem{AK}
	Afrah A.~N. Abdou and Mohamed~A. Khamsi, \emph{Fixed point results of pointwise
		contractions in modular metric spaces}, Fixed Point Theory Appl. (2013),
	2013:163, 11. \MR{3081728}
	
	\bibitem{Ch06}
	V.~V. Chistyakov, \emph{Metric modulars and their application}, Dokl. Akad.
	Nauk \textbf{406} (2006), no.~2, 165--168. \MR{2258511}
	
	\bibitem{ch1}
	V.~V. {Chistyakov}, \emph{{A fixed point theorem for contractions in modular
			metric spaces}}, ArXiv e-prints (2011).
	
	\bibitem{Ch}
	Vyacheslav~V. Chistyakov, \emph{Modular metric spaces. {I}. {B}asic concepts},
	Nonlinear Anal. \textbf{72} (2010), no.~1, 1--14. \MR{2574913}
	
	\bibitem{ChII}
	\bysame, \emph{Modular metric spaces. {II}. {A}pplication to superposition
		operators}, Nonlinear Anal. \textbf{72} (2010), no.~1, 15--30. \MR{2574914}
	
	\bibitem{Khamsi}
	Mohamed~A. Khamsi, \emph{A convexity property in modular function spaces},
	Math. Japon. \textbf{44} (1996), no.~2, 269--279. \MR{1416264}
	
	\bibitem{Koz}
	Wojciech~M. Kozlowski, \emph{Modular function spaces}, Monographs and Textbooks
	in Pure and Applied Mathematics, vol. 122, Marcel Dekker, Inc., New York,
	1988. \MR{1474499}
	
	\bibitem{MR}
	Mihai Mih\u{a}ilescu and Vicen\c{t}iu R\u{a}dulescu, \emph{On a nonhomogeneous
		quasilinear eigenvalue problem in {S}obolev spaces with variable exponent},
	Proc. Amer. Math. Soc. \textbf{135} (2007), no.~9, 2929--2937. \MR{2317971}
	
	\bibitem{MO}
	J.~Musielak and W.~Orlicz, \emph{On modular spaces}, Studia Math. \textbf{18}
	(1959), 49--65. \MR{0101487}
	
	\bibitem{Nakano}
	Hidegor{\^o} Nakano, \emph{Modulared {S}emi-{O}rdered {L}inear {S}paces},
	Maruzen Co., Ltd., Tokyo, 1950. \MR{0038565}
	
	\bibitem{Richard}
	Richard~S. Palais, \emph{A simple proof of the {B}anach contraction principle},
	J. Fixed Point Theory Appl. \textbf{2} (2007), no.~2, 221--223. \MR{2372985}
	
	\bibitem{Rao}
	M.~M. Rao and Z.~D. Ren, \emph{Applications of {O}rlicz spaces}, Monographs and
	Textbooks in Pure and Applied Mathematics, vol. 250, Marcel Dekker, Inc., New
	York, 2002. \MR{1890178}
	
\end{thebibliography}
\end{document}